\newtheorem{theorem}{Theorem}
\newtheorem{definition}{Definition}
\newtheorem{assumption}{A}
\newtheorem{lemma}{Lemma}
\newtheorem{proposition}{Proposition}
\newtheorem{remark}{Remark}
\date{}
\numberwithin{equation}{section}
\numberwithin{theorem}{section}
\numberwithin{lemma}{section}
\numberwithin{corollary}{section}
\numberwithin{remark}{section} 
\numberwithin{proposition}{section}
\numberwithin{definition}{section}
\newcommand{\tr}{\operatorname{Tr}}
\newcommand{\loglip}{\operatorname{Log-Lip}}
\renewcommand{\div}{\operatorname{div}}
\def \R {\mathbb{R}}
\begin{document}

\title[Existence and improved regularity for a nonlinear system]{Existence and improved regularity for a nonlinear system with collapsing ellipticity}

\author[E. A. Pimentel]{Edgard A. Pimentel}
\address{Department of Mathematics, Pontifical Catholic University of Rio de Ja\-nei\-ro -- PUC-Rio, 22451-900, G\'avea, Rio de Janeiro-RJ, Brazil}
\email{pimentel@puc-rio.br}

\author[J.M. Urbano]{Jos\'e Miguel Urbano}
\address{University of Coimbra, CMUC, Department of Mathematics, 3001-501 Coimbra, Portugal \& Departamento de Matem\'atica, Universidade Federal da Pa\-ra\'\i \-ba, 58.051-900 Jo\~ao Pessoa, PB-Brazil}
\email{jmurb@mat.uc.pt}

\thanks{Part of this work was written during a visit of the authors  to the Hausdorff Research Institute for Mathematics (HIM), University of Bonn, in January  2019. The support and the hospitality of HIM are gratefully acknowledged.}
\thanks{EAP partially funded by CNPq-Brazil (Grants \#433623/2018-7, \#307500/2017-9), FAPERJ-Brazil (Grant \#E26/200.002/2018), Coordena\c{c}\~ao de Aperfei\c{c}oamento de Pessoal de N\'ivel Superior - Brasil (CAPES) - Finance Code 001 and PUC-Rio baseline funds.}
\thanks{JMU partially supported by FCT -- Funda\c c\~ao para a Ci\^encia e a Tecnologia, I.P., through projects PTDC/MAT-PUR/28686/2017 and UTAP-EXPL/MAT/0017/2017, and by the Centre for Mathematics of the University of Coimbra - UIDB/00324/2020, funded by the Portuguese Government through FCT/MCTES.}

\begin{abstract}
We study a nonlinear system made up of an elliptic equation of blended singular/degenerate type and Poisson's equation with a lowly integrable source. We prove the existence of a weak solution in any space dimension and, chiefly, derive an improved $\mathcal{C}^{1,\loglip}$ regularity estimate using tangential analysis methods. The system illustrates a sophisticated version of the proverbial thermistor problem and our results are new even in simpler modelling scenarios.
\bigskip

\noindent \textbf{Keywords:} Elliptic singular/degenerate system, existence, improved regularity, thermistor problem.

\bigskip

\noindent \textbf{AMS Subject Classifications MSC 2010:} 35B65, 35J57, 35J92, 35Q79.

%35B65  	Smoothness and regularity of solutions
%35J57  	Boundary value problems for second-order elliptic systems
%35J92  	Quasilinear elliptic equations with $p$-Laplacian
%35Q79  	PDEs in connection with classical thermodynamics and heat transfer

\end{abstract}

\maketitle

\section{Introduction}

There are many good reasons to investigate the regularity properties of solutions to nonlinear partial differential equations (pdes) and systems. Perhaps the most compelling is the enhancement of more efficient numerical schemes leading to concrete applications of what would otherwise be a purely theoretical endeavour.  In recent years, there has been an intense activity around the development of a class of methods and techniques that culminate in the retrieval of improved regularity properties for solutions of a given pde imported from another pde which is \textit{somehow close} to the original one. This has been done, for example, in \cite{TU1}, where the sharp regularity for solutions of the inhomogeneous $p$-parabolic equation was derived from the regularity of $p$-caloric functions or in \cite{PT}, where Sobolev regularity for viscosity solutions of fully nonlinear elliptic equations was obtained from the associated recession profile. Other instances of this approach to regularity can be found, for example, in \cite{ATU1, ATU2, AMU, T1, T2, T3, T4, TU2}. For pointwise gradient bounds in terms of potentials, see also \cite{KM1, KM2, KM3}. 

In this paper, we extend this set of ideas, hitherto restricted to the analysis of single equations, to treat the nonlinear system of pdes 
\begin{equation}\label{bettercallsaul}
\left\{
\begin{array}{ll}
-\div \left(  \left| Du \right|^{\sigma \left( \theta (x) \right) -2} Du\right)   =  f \\
&  \\
-\Delta \theta  =   \lambda(\theta(x))  \left| Du \right|^{\sigma \left( \theta (x) \right)} ,
\end{array}
\right.
\end{equation}
proposed by Zhikov in \cite{Z07}, describing the steady state distribution of the electrical potential $u$ and the temperature $\theta$ in a \textit{thermistor}, a  portmanteau for a resistor whose electrical properties are thermally dependent. In \eqref{bettercallsaul}, $f$ is a given source and $\sigma$ and $\lambda$ are functions related to the electrical conductivity and resistance of the model, respectively. The thermistor problem, in its different versions, has been considered by many authors and there is an abundant literature around it, both in the physics/engineering and the mathematical communities. Far from being exhaustive, we mention here \cite{AC, GLS, HRS, R, Z07}.

The mathematical analysis of the strongly coupled system \eqref{bettercallsaul} involves two major difficulties. On the one hand, the second equation has a right-hand side merely in $L^1$ and this low integrability is known to be a source of severe analytical hazards. On the other hand, the first equation is not uniformly elliptic as its modulus of ellipticity collapses at points where $|Du|=0$, vanishing if the variable exponent $\sigma (\theta(x))$ is above two and blowing up if it is below that threshold. Since the exponent can vary in a range that crosses two, degeneracies and singularities are blended in our problem and one of the main achievements of the approach we use is to seamlessly treat the switching between regimes that otherwise correspond to two markedly different cases.

We first treat the existence of weak solutions under homogeneous Dirichlet boundary conditions. Contrary to the results of Zhikov in \cite{Z07}, which are valid only in dimensions up to three, we unlock the existence in any dimension. The key is in the use of a regularity result for the first equation in \eqref{bettercallsaul} that allows us to bypass the embedding related constraints surfacing in higher dimensions. We show in fact that a weak solution exists in the regularity class $(u,\theta) \in \mathcal{C}^{1,\beta} \times \mathcal{C}^{1,\alpha}$. We then improve the regularity for the temperature $\theta$, producing a $\mathcal{C}^{1,\loglip}$ local regularity result, with appropriate estimates.

The paper is organised as follows. In addition to specifying what we mean by a weak solution, we state in section \ref{salamanca} the assumptions on the data of the problem and present our main result. In section \ref{comboio}, we gather a few auxiliary results that will be instrumental in the sequel. The existence of weak solutions is established in section \ref{spicy}, by means of Schauder's Fixed Point Theorem. The final section \ref{call bell} brings the proof of the improved regularity.

\section{Assumptions and main result}\label{salamanca}

The system \eqref{bettercallsaul} holds in a given smooth domain $U\subset\mathbb{R}^d$, $d \geq 2$. We start with the formal definition of weak solution. To slightly assuage the notation, set
$$p:=\sigma \circ \theta \qquad {\rm and} \qquad a:=\lambda \circ \theta.$$ 

\begin{definition}\label{defweaksol}

A weak solution of \eqref{bettercallsaul}, coupled with homogeneous Dirichlet boundary conditions 
$$u=0 \qquad {\rm and} \qquad \theta=0 \qquad {\rm on} \ \partial U,$$
 is a pair 
$$(u, \theta) \in W_0^{1, p(\cdot)} (U) \times H_0^1(U)$$
such that
\begin{equation}\label{stanley}
\int_U \left| Du \right|^{p(x) -2} Du \cdot D\varphi = \int_U f \varphi, \quad \forall \, \varphi \in W_{0}^{1, p(\cdot)} (U)
\end{equation}
and
\begin{equation}\label{ermelinda-syrah}
\int_U D\theta \cdot D\psi = \int_U a(x) \left| Du \right|^{p(x)} \psi, \quad \forall \, \psi \in H_{0}^1(U) \cap L^\infty (U).
\end{equation}

\end{definition}

It is timely to comment again on the mathematical difficulties arising from this definition. First, we notice that $u\in W^{1,p(\cdot)}_0(U)$ leads to
\[
	\left|Du\right|^{p(\cdot)}\,\in\,L^1(U).
\]
As a consequence, the structure of \eqref{bettercallsaul} falls short in producing further regularity for the temperature $\theta$ since the integrability of the right-hand side of Poisson's equation does not even ensure the continuity of solutions. To circumvent this structural difficulty of the system, we will resort to an improved regularity result for the first equation in \eqref{bettercallsaul}. 

A second challenge comes from the eventual collapse of the ellipticity in \eqref{bettercallsaul}. Indeed, along $\left\lbrace Du\,=\,0\right\rbrace$, the first equation either degenerates or blows up, depending on $p(\cdot)$. In particular, we allow this variable exponent to oscillate around $p(\cdot)\equiv 2$; hence, the system may switch between the singular and the degenerate regimes. Our approach deals with this difficulty in a seamless fashion, which is in itself an unusual feature that deserves to be highlighted.
  
\medskip

We now list the main assumptions on the data of the problem. Throughout the paper, we say a constant is {\it universal} if it only depends on the data.

\begin{assumption}[Uniform bound on $\sigma$]\label{chickpeas}
The function $\sigma:\mathbb{R}\to\mathbb{R}$ is bounded from below and there exists a constant $\sigma^-$ such that
\[
	1\,\leq\,\frac{2d}{d+2}\,<\,\sigma^-\,\leq\,\sigma(x).
\]
\end{assumption}

The lower bound on $\sigma^-$ implies that solutions $u$ to the first equation in \eqref{bettercallsaul} are locally bounded. But, by considering a possibly unbounded (from above) function $\sigma$, we completely detach the analysis from the constant-exponent setting. Indeed, were $\sigma$ bounded and satisfying the next assumption, the variable exponent setting would not present extra challenges \textit{vis-a-vis} the constant case. 

\begin{assumption}[Lipschitz continuity of $\sigma$]\label{currylentils}
The function $\sigma$ is Lipschitz continuous and there exists $C_\sigma>0$ such that
\[
	\left\|\sigma\right\|_{\mathcal{C}^{0,1}(\mathbb{R})}\,\leq\,C_\sigma.
\]
\end{assumption}

Notice that A\ref{currylentils} does not imply $p(\cdot)$ to be a Lipschitz continuous exponent. Even if $\theta$ is H\"older continuous, the composite function $p=\sigma\circ\theta$ is, \textit{a priori}, merely H\"older continuous. 

\begin{assumption}[Uniform bounds on $\lambda$]\label{cologne}
The function $\lambda:\mathbb{R}\to\mathbb{R}$ is bounded and there exists a constant $\lambda^+$, to be fixed later and depending only on the data, such that
\[
	\left\|\lambda\right\|_{L^\infty(\mathbb{R})}\,\leq\,\lambda^+.
\]
\end{assumption}

The upper bound on $\lambda$ plays a critical role in the approximation methods put forward further in the paper. Perhaps more subtle is the fact that $\lambda^+$ is determined endogenously, in the context of the fixed-point argument in section \ref{spicy}.

\begin{assumption}[Integrability of the source]\label{shiraz}
The source term $f:U\to\mathbb{R}$ is in $L^\infty(U)$ and there exists $C_f>0$ such that
\[
	\left\|f\right\|_{L^\infty(U)}\,\leq\,C_f.
\]
\end{assumption}

We recall that a function is Log-Lipschitz continuous if it has a modulus of continuity of the type $\omega (\sigma)= \sigma \ln (1/\sigma)$. Since, for  $0<\gamma <1$, we have
$$\sigma \leq \sigma \ln (1/\sigma) \leq \frac{1}{(1-\gamma)e} \, \sigma^\gamma , \quad \forall \, 0<\sigma \leq \frac{1}{e},$$ 
this is weaker than Lipschitz continuity but implies the $C^{0,\gamma}$ H\"older continuity, for every $0<\gamma <1$. Observe that the constant on the right-hand side of the second inequality (which, in fact, holds for every $\sigma >0$) blows up as $\gamma \to 1^-$.

We now state the main theorem of our paper, the $\mathcal{C}^{1,\loglip}_{loc}(U)$-regularity for the temperature $\theta$.

\begin{theorem}\label{slicedbread}
Suppose A\ref{chickpeas}-A\ref{shiraz} are in force  and let $(u,\theta)$ be a weak solution to \eqref{bettercallsaul} such that
$$(u,\theta) \in W^{1,p(\cdot)}_0(U)\times\mathcal{C}^{0,\alpha}(\overline{U}).$$ 
Then, $\theta\in\mathcal{C}^{1,\loglip}_{loc}(U)$ and, for every compact $K\subset U$,  there exists $C>0$, depending only on the data and $K$, such that
\[
	\left\|\theta\right\|_{\mathcal{C}^{1,\loglip}(K)}\,\leq\,C.
\]
\end{theorem}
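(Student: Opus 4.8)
The plan is to reduce the regularity of $\theta$ entirely to a pointwise bound on the right-hand side of Poisson's equation and then to invoke the sharp borderline estimate for the Laplacian. The guiding observation is that, since $\lambda$ is only assumed bounded (A\ref{cologne}), the source $a(x)|Du|^{p(x)}$ of the second equation cannot be expected to be continuous; the most one can hope to extract is membership in $L^\infty_{loc}$. This is no loss, because an $L^\infty$ right-hand side is precisely the borderline case for which $-\Delta\theta\in L^\infty$ forces $D\theta$ to be log-Lipschitz. The whole difficulty therefore concentrates in showing that $|Du|^{p(\cdot)}$ is locally bounded, which amounts to a local gradient bound for $u$.

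First I would exploit the hypothesis $\theta\in\mathcal{C}^{0,\alpha}(\overline{U})$. Since $U$ is bounded, $\theta$ is bounded, so by A\ref{chickpeas}--A\ref{currylentils} the exponent $p=\sigma\circ\theta$ is bounded, $\sigma^-\le p(x)\le\sigma^+:=\sup_U\sigma(\theta)$, and H\"older continuous of exponent $\alpha$ (Lipschitz $\sigma$ composed with $\mathcal{C}^{0,\alpha}$ temperature). With a H\"older --- in particular log-H\"older --- variable exponent and an $L^\infty$ source $f$, the improved regularity result for the first equation in \eqref{bettercallsaul} (established among the auxiliary results of section \ref{comboio}) applies and yields $u\in\mathcal{C}^{1,\beta}_{loc}(U)$ with a universal estimate; this is the step that copes, in a unified way, with the collapse of ellipticity across $p\equiv 2$. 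In particular, for every compact $K'\subset U$ there is $M>0$, depending only on the data and $K'$, with $\|Du\|_{L^\infty(K')}\le M$.

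Next I would bound the source of the temperature equation. Using $\|a\|_{L^\infty}\le\lambda^+$ (A\ref{cologne}) and the elementary inequality $|Du|^{p(x)}\le(1+|Du|)^{\sigma^+}$, I obtain $\big\||Du|^{p(\cdot)}a\big\|_{L^\infty(K')}\le\lambda^+(1+M)^{\sigma^+}=:C_g$. Crucially, no continuity of $a$ or of the exponent enters here: boundedness of $Du$ and of $\lambda$ suffices. Thus \eqref{ermelinda-syrah} says that $\theta$ is a weak solution of $-\Delta\theta=g$ with $g\in L^\infty_{loc}(U)$ and $\|g\|_{L^\infty(K')}\le C_g$.

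Finally, I would establish the borderline estimate for $-\Delta\theta=g$, $g\in L^\infty$, via the tangential analysis advertised in the abstract. Fix $B_r(x_0)\subset\subset U$ and compare $\theta$ with the harmonic replacement $h$ satisfying $h=\theta$ on $\partial B_r$: since $\theta-h$ solves Poisson's equation with zero boundary data, $\|\theta-h\|_{L^\infty(B_r)}\lesssim r^2\|g\|_{L^\infty(B_r)}$, while $h$ enjoys interior $\mathcal{C}^{1,1}$ (indeed smooth) estimates. Iterating this comparison on the dyadic scales $r_k=2^{-k}r$ produces affine functions $\ell_k$ with $\sup_{B_{r_k}}|\theta-\ell_k|\lesssim r_k^2$ and controlled gradient drift $|D\ell_{k+1}-D\ell_k|\lesssim r_k$. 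The point where the logarithm is genuinely earned is the summation: for $|x-y|\approx r_N=2^{-N}r$, telescoping the gradient increments over the $N\sim\ln(1/|x-y|)$ scales separating $|x-y|$ from $r$ yields $|D\theta(x)-D\theta(y)|\lesssim|x-y|\ln(1/|x-y|)$, which is exactly the $\loglip$ modulus --- and not merely $\mathcal{C}^{1,\gamma}$ for each $\gamma<1$. Collecting the scale-invariant constants and covering $K$ by finitely many such balls gives the stated estimate $\|\theta\|_{\mathcal{C}^{1,\loglip}(K)}\le C$. The main obstacle I anticipate is precisely this last step: carrying out the iteration so that the accumulated gradient oscillation sums to the sharp logarithmic factor rather than degrading to an arbitrarily small H\"older loss; controlling the constants uniformly across the switch between the degenerate and singular regimes, inherited through $M$ from the first equation, is the accompanying technical point.
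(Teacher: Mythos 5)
Your reduction is sound up to its final step, and it is genuinely more direct than the paper's route: from $\theta\in\mathcal{C}^{0,\alpha}(\overline{U})$ you correctly get that $p=\sigma\circ\theta$ is bounded and H\"older continuous, Lemma \ref{pedronunes} then gives $u\in\mathcal{C}^{1,\beta}$ with a universal gradient bound, and hence $g:=a(x)|Du|^{p(x)}\in L^\infty_{loc}(U)$ with a quantitative bound. This reduction to $-\Delta\theta=g$, $g\in L^\infty$, is indeed the engine behind the paper's argument as well (there it is hidden inside the compactness step of Proposition \ref{prop_stanley} and the stability result, Proposition \ref{vegan}), and your version avoids the contradiction/compactness machinery and the scaling in $\lambda$ altogether. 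The gap is entirely in how you prove the borderline estimate.

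The dyadic iteration you describe cannot be run as stated. You claim affine functions $\ell_k$ with $\sup_{B_{r_k}}|\theta-\ell_k|\lesssim r_k^2$; this is precisely pointwise $\mathcal{C}^{1,1}$ regularity, which is \emph{false} for general $g\in L^\infty$ (Calder\'on--Zygmund theory fails at $p=\infty$; $D^2\theta$ is only in BMO, and classical examples with bounded, even continuous, $g$ have unbounded Hessian). The obstruction appears inside the induction: a harmonic function is \emph{not} approximated by affine functions to order $o(r^2)$ --- its quadratic Taylor term carries a dimensional constant that cannot be made small by shrinking the radius --- so the recursion for the constants has the form $A_{k+1}\le C+CA_k$ and does not close with $A_k$ bounded; one must carry the quadratic part along. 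Moreover, even granting your claimed bounds, your telescoping does not produce a logarithm: increments $|D\ell_{k+1}-D\ell_k|\lesssim r_k=2^{-k}r$ form a geometric series, so $\sum_{k\ge N}|D\ell_{k+1}-D\ell_k|\lesssim r_N\approx|x-y|$, which would give a Lipschitz --- not Log-Lipschitz --- modulus for $D\theta$, contradicting the known sharpness of the borderline case. This is exactly where the paper does its real work: Propositions \ref{maia} and \ref{raimundao} iterate with second-order polynomials $P_n=a_n+b_n\cdot x+\tfrac12 x^TM_nx$ with $\tr(M_n)=0$, obtaining $\sup_{B_{\rho^n}}|\theta-P_n|\le\rho^{2n}$ and increments $|M_{n+1}-M_n|\le C$; the Hessians need not converge, only $|M_n|\le Cn$, and it is this \emph{linear growth of the trace-free Hessian part} --- not the gradient increments, which do sum geometrically ($|b_n-D\theta(0)|\le C\rho^n$) --- that yields $\sup_{B_r}|\theta-\theta(0)-D\theta(0)\cdot x|\le Cr^2\ln\frac1r$. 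To repair your proof, either redo the iteration with trace-free quadratic corrections (reproducing section \ref{call bell}), or replace it by the potential-theoretic argument: writing $\theta$ locally as a Newtonian potential of $g$ plus a harmonic function, the estimate $\int_{B}|DN(x-z)-DN(y-z)|\,dz\lesssim|x-y|\ln(1/|x-y|)$ for the Newtonian kernel $N$ gives the Log-Lipschitz gradient bound directly.
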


The theorem can be interpreted as follows: a slight refinement in the existence class resonates through the highly nonlinear coupling of the system to yield a substantial gain in regularity. 

\begin{remark}[Fully nonlinear variant]The tangential analysis techniques we will be using find application in a variety of distinct contexts. In particular, we believe that our arguments are flexible enough to accommodate a fully nonlinear variation of our problem, for example,
\begin{equation*}
	\begin{cases}
		\left|Du\right|^{\sigma(\theta(x))}F(D^2u,x)\,=\,f\\
		  \\
		G(D^2\theta,x)\,=\,\lambda(\theta(x))\left|Du\right|^{\sigma(\theta(x))+2},
	\end{cases}
\end{equation*}
where $F,\,G:\mathcal{S}(d)\times U\to\mathbb{R}$ are fully nonlinear elliptic operators. By designing an appropriate limiting profile, we would expect to establish improved regularity of the solutions in H\"older spaces.
\end{remark}

\section{A few auxiliary results}\label{comboio}

In this section, we gather previous developments and a few auxiliary results to which we resort in this paper.  We start with a lemma on the existence of weak solutions to the $p(x)$-Laplace equation.

\begin{lemma}[Existence of solutions to $p(x)$-Laplace equations]\label{indopak}
	Let $p:U\to\mathbb{R}$ be a continuous function and suppose $f\in L^\infty(U)$. Then, there exists a weak solution $u \in W_0^{1, p(\cdot)} (U) $ to
	\begin{equation}\label{coimbrab}
		\begin{cases}
			-\div\left(\left|Du\right|^{p(x)-2}Du\right)\,=\,f&\;\;\;\;\;\mbox{in}\;\;\;\;\;U\\
			u\,=\,0&\;\;\;\;\;\mbox{on}\;\;\;\;\;\partial U.
		\end{cases}
	\end{equation}
\end{lemma}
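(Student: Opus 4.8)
The plan is to obtain the solution by the direct method of the calculus of variations, realising $u$ as the minimiser of the natural energy functional associated with \eqref{coimbrab}. Since $U$ is bounded and $p$ is continuous and bounded on $U$, with $1<p^-:=\inf_U p\leq p^+:=\sup_U p<\infty$ in the setting at hand, the variable exponent Sobolev space $W_0^{1,p(\cdot)}(U)$ is a separable, reflexive Banach space on which Poincar\'e's inequality holds; in particular $\left\|Dv\right\|_{L^{p(\cdot)}(U)}$ is an equivalent norm. First I would introduce
\[
	J(v)\,:=\,\int_U\frac{1}{p(x)}\left|Dv\right|^{p(x)}\,dx-\int_U fv\,dx,\qquad v\in W_0^{1,p(\cdot)}(U),
\]
and check it is well defined and finite: the first term is controlled by the modular $\rho(Dv):=\int_U|Dv|^{p(x)}\,dx$, while for the second one uses that $f\in L^\infty(U)\subset L^{p'(\cdot)}(U)$, with $p'$ the pointwise conjugate exponent, so that the generalised H\"older inequality together with Poincar\'e bounds $\left|\int_U fv\right|$ by $C\left\|f\right\|_{L^\infty(U)}\left\|Dv\right\|_{L^{p(\cdot)}(U)}$.

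The core of the argument is to verify the three structural properties that the direct method requires. Coercivity follows from the modular--norm relation: when $\left\|Dv\right\|_{L^{p(\cdot)}(U)}\geq1$ one has $\rho(Dv)\geq\left\|Dv\right\|_{L^{p(\cdot)}(U)}^{p^-}$, so the superlinear growth of the first term dominates the at most linear growth of the source term after an application of Young's inequality, whence $J(v)\to+\infty$ as $\left\|v\right\|\to\infty$. Strict convexity is inherited from the strict convexity of $t\mapsto|t|^{p(x)}$ for each fixed $x$. Weak lower semicontinuity of the modular term is a consequence of its convexity and strong continuity, and the linear term is weakly continuous since it defines a bounded linear functional; hence $J$ is weakly lower semicontinuous.

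With these properties in hand, I would take a minimising sequence $(u_k)$, which is bounded by coercivity, extract a weakly convergent subsequence $u_k\rightharpoonup u$ in $W_0^{1,p(\cdot)}(U)$ by reflexivity, and conclude $J(u)\leq\liminf_k J(u_k)=\inf J$ by weak lower semicontinuity, so that $u$ is a minimiser. Finally, computing the first variation $\frac{d}{dt}J(u+t\varphi)\big|_{t=0}=0$ for every $\varphi\in W_0^{1,p(\cdot)}(U)$ recovers precisely the weak formulation in \eqref{coimbrab}; differentiation under the integral sign is justified by dominated convergence, using $p^+<\infty$. One could equivalently invoke the theory of monotone operators, since the map $\langle Au,\varphi\rangle:=\int_U|Du|^{p(x)-2}Du\cdot D\varphi$ is bounded, strictly monotone, hemicontinuous and coercive, so that Browder--Minty applies.

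The main obstacle is technical rather than conceptual: all the standard tools---H\"older, Poincar\'e, Young and the coercivity estimate---must be deployed in their variable exponent incarnations, where the modular and the Luxemburg norm no longer coincide, so one has to track carefully the two regimes in which $\left\|Dv\right\|_{L^{p(\cdot)}(U)}$ is larger or smaller than one throughout the estimates. As a byproduct, strict convexity delivers uniqueness of the minimiser, although that is not part of the assertion.
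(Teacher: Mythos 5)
Your proposal is correct and matches the paper's treatment: the paper gives no in-text proof of this lemma, deferring to \cite{FZ03} (see also \cite{DHHR11}), where precisely this direct-method/variational argument in $W_0^{1,p(\cdot)}(U)$ (with the Browder--Minty monotone-operator route as the standard alternative) is carried out. Your implicit hypothesis $1<p^-\leq p^+<\infty$, which underlies reflexivity, Poincar\'e and the modular--norm estimates, is indeed available in the setting of the paper, since $p=\sigma\circ\theta$ with $\sigma$ Lipschitz, $\sigma(\cdot)\geq\sigma^->2d/(d+2)\geq 1$ and $\theta$ continuous on $\overline{U}$.
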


For the proof of Lemma \ref{indopak} we refer the reader to \cite{FZ03} (see also \cite{DHHR11}). In fact, finer regularity results for \eqref{coimbrab} are available and the next lemma concerns the H\"older-continuity of the gradient of the solutions. For a proof, we refer the reader to \cite{F07}. 

\begin{lemma}[Regularity of the solutions to $p(x)$-Laplace equations]\label{pedronunes}
Let $u\in W_0^{1,p(\cdot)}(U)$ be a weak solution to \eqref{coimbrab}. Suppose $p\in\mathcal{C}^{0,\alpha}(U)$ and $f\in L^\infty(U)$. Then,  $u\in\mathcal{C}^{1,\beta}(U)$, for some $\beta\in(0,1)$, and there exists $C>0$ such that
\[
	\left\|u\right\|_{\mathcal{C}^{1,\beta}(U)}\,\leq\,C,
\]
where $C=C(p,f)$ and $\beta=\beta(p,f)$.
\end{lemma}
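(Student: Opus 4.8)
The plan is to establish the $\mathcal{C}^{1,\beta}$ estimate by a freezing-the-exponent argument, perturbing off the well-understood constant-exponent $p$-Laplacian, for which the $\mathcal{C}^{1,\gamma}$ theory is classical (Uhlenbeck, Uraltseva, DiBenedetto, Tolksdorf, Lewis). First I would carry out two preliminary reductions. Since $f\in L^\infty(U)$ and $p$ is continuous with $p^->1$, a De Giorgi--Nash--Moser iteration adapted to the variable-exponent (Musielak--Orlicz) functional framework yields the local boundedness of $u$ together with a quantitative $L^\infty_{\loc}$ bound. Next, combining a Caccioppoli inequality with a reverse Hölder / Gehring-type lemma, I would upgrade the natural energy bound $|Du|^{p(\cdot)}\in L^1_{\loc}(U)$ to a self-improving higher integrability
$$
	|Du|^{p(\cdot)}\,\in\,L^{1+\delta}_{\loc}(U)
$$
for some universal $\delta>0$. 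This extra integrability is precisely what will control the exponent oscillation in the comparison step.

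For the comparison, I would fix a ball $B_{2r}(x_0)\subset\subset U$ and freeze the exponent by setting $p_0:=p(x_0)$. Let $v\in u+W^{1,p_0}_0(B_r)$ solve the constant-exponent problem $-\div(|Dv|^{p_0-2}Dv)=0$ in $B_r$ with $v=u$ on $\partial B_r$. For this frozen equation, the classical regularity theory provides a $\mathcal{C}^{1,\gamma}$ estimate and, crucially, an excess-decay inequality of the form
$$
	\frac{1}{|B_\rho|}\int_{B_\rho}\left|Dv-(Dv)_{B_\rho}\right|^{p_0}\,\le\,C\Big(\frac{\rho}{r}\Big)^{p_0\gamma}\,\frac{1}{|B_r|}\int_{B_r}\left|Dv-(Dv)_{B_r}\right|^{p_0},\qquad 0<\rho\le r.
$$

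The heart of the argument is then the error estimate for $u-v$. Testing the two equations against $u-v$ and exploiting the monotonicity of the $p_0$-Laplacian, I would bound $\frac{1}{|B_r|}\int_{B_r}|Du-Dv|^{p_0}$ by a contribution coming from $f$ (of order a positive power of $r$, via the $L^\infty$ source bound) plus a term measuring the defect $\big||Du|^{p(x)-2}Du-|Du|^{p_0-2}Du\big|$. Writing $|Du|^{p(x)-p_0}=\exp\big((p(x)-p_0)\ln|Du|\big)$ and using $\osc_{B_r}p\le C r^\alpha$ together with the higher integrability above, this defect is controlled by a positive power of $r$ times a power of the local energy. Feeding these estimates into the excess decay for $v$ produces a perturbed decay inequality for the excess of $Du$, and a standard Campanato-type iteration over dyadic scales then gives $Du\in\mathcal{C}^{0,\beta}_{\loc}(U)$, with $\beta=\beta(p,f)$ and the asserted estimate $\left\|u\right\|_{\mathcal{C}^{1,\beta}(U)}\le C(p,f)$.

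The main obstacle is the control of the exponent-freezing defect: the factor $|Du|^{p(x)-p_0}$ carries $\ln|Du|$, which is a priori unbounded, so the whole scheme hinges on the self-improving higher integrability to absorb this logarithmic loss. Equally delicate is the requirement that all comparison and Caccioppoli estimates be performed with constants that remain uniform as $p(\cdot)$ crosses the value $2$, so that the degenerate ($p>2$) and singular ($p<2$) regimes are treated seamlessly by one and the same argument.
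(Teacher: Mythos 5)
The paper itself gives no proof of this lemma: it is quoted verbatim from Fan \cite{F07}, whose point --- visible already in the title, \emph{Global $\mathcal{C}^{1,\alpha}$ regularity for variable exponent elliptic equations in divergence form} --- is regularity \emph{up to the boundary}. Your interior scheme (local boundedness, Gehring-type higher integrability, freezing the exponent at $p_0=p(x_0)$, comparison with the $p_0$-harmonic replacement, Campanato iteration) is the standard and essentially sound route to $Du\in\mathcal{C}^{0,\beta}_{\loc}(U)$; it is the Coscia--Mingione/Acerbi--Mingione argument on which \cite{F07} builds, and your identification of the two delicate points (the $\ln|Du|$ loss in the frozen defect, absorbed by higher integrability, and uniformity of constants as $p(\cdot)$ crosses $2$) is accurate.

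The genuine gap is the final clause of your last sentence, where you pass from $\mathcal{C}^{0,\beta}_{\loc}$ to the asserted bound $\left\|u\right\|_{\mathcal{C}^{1,\beta}(U)}\leq C$. That bound is uniform over all of $U$, and this is exactly how the lemma is used in the paper: in the existence argument (Proposition \ref{rocinha}) the authors invoke $u\in\mathcal{C}^{1,\beta}(\overline{U})$ with a uniform estimate to conclude that $|Du|$ is \emph{globally} bounded, which is what makes the right-hand side of the Poisson equation for $\theta$ bounded and the fixed-point map well defined. Your Campanato iteration runs over balls $B_{2r}(x_0)\subset\subset U$, so its constants degenerate as $x_0\to\partial U$; upgrading interior control to the global norm is not a formality but requires a boundary version of every step --- flattening the smooth boundary, exploiting the homogeneous Dirichlet datum, comparison with frozen-exponent problems on half-balls, and patching boundary with interior estimates. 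That boundary analysis is precisely the contribution of \cite{F07} over the earlier interior results, and without it your argument does not prove the statement as written, nor support the use made of it later in the paper.
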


We conclude this section with a result on the sequential stability of the weak solutions to \eqref{bettercallsaul}. First, it unlocks a continuity feature, required in the proof of the existence of solutions to \eqref{bettercallsaul}. Furthermore, sequential stability is pivotal to the approximation analysis underlying the improved regularity for our problem.

\begin{proposition}[Sequential stability of weak solutions]\label{vegan}
Suppose A\ref{chickpeas}-A\ref{shiraz} are in force. Let $(u_n,\theta_n)$ be a weak solution to 
\begin{equation}\label{bettercallsauln}
\left\{
\begin{array}{ll}
-\div \left(\left| Du_n \right|^{\sigma_n \left( \theta_n (x) \right) -2} Du_n\right)   =  f_n & {\rm in } \ U\\
&  \\
-\Delta \theta_n  =   \lambda_n \left( \theta_n (x) \right) \left| Du_n \right|^{\sigma_n \left( \theta_n (x) \right)} & {\rm in } \ U,
\end{array}
\right.
\end{equation}
where $(f_n)_{n\in\mathbb{N}}$, $(\sigma_n)_{n\in\mathbb{N}}$ and $(\lambda_n)_{n\in\mathbb{N}}$ are such that 
\[
	\left\|f_n\,-\,f\right\|_{L^\infty(U)}\,+\,\left\|\sigma_n\,-\,\sigma\right\|_{L^\infty(\R)}\,+\,\left\|\lambda_n\right\|_{L^\infty(\R)}\,\longrightarrow 0.
\]
Suppose further there exists $(u_\infty,\theta_\infty)$ such that $u_n\to u_\infty$ in $\mathcal{C}^{1,\beta}$ and $\theta_n\to\theta_\infty$ in $\mathcal{C}^{1,\beta}$, for some $\beta\in(0,1)$. Then,
\begin{equation*}\label{bettercallsaul_limit}
\left\{
\begin{array}{ll}
-\div \left( \left| Du_\infty \right|^{\sigma \left( \theta_\infty (x) \right) -2} Du_\infty\right)   =  f & {\rm in } \ U\\
&  \\
-\Delta \theta_\infty  =  0 & {\rm in } \ U.
\end{array}
\right.
\end{equation*}
\end{proposition}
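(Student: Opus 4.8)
The plan is to pass to the limit, separately, in the two weak formulations \eqref{stanley} and \eqref{ermelinda-syrah} written for the approximating pairs $(u_n,\theta_n)$, with $p_n:=\sigma_n\circ\theta_n$ and $a_n:=\lambda_n\circ\theta_n$. Before doing so, I would record the uniform control that the $\mathcal{C}^{1,\beta}$-convergence hypotheses supply for free. Since $u_n\to u_\infty$ and $\theta_n\to\theta_\infty$ in $\mathcal{C}^{1,\beta}$, the sequences $(Du_n)_n$ and $(\theta_n)_n$ are uniformly bounded; in particular the $\theta_n$ take values in a fixed compact interval $[-M,M]$, on which $\sigma$ is bounded by continuity (A\ref{currylentils}). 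Combined with $\|\sigma_n-\sigma\|_{L^\infty(\R)}\to 0$, this yields $\eta>0$ and a finite $P^+$ such that $1+\eta\le p_n(x)\le P^+$ for all $x$ and all large $n$, and hence a uniform bound on $|Du_n|^{p_n(\cdot)}$. Moreover, writing $|\sigma_n(\theta_n)-\sigma(\theta_\infty)|\le\|\sigma_n-\sigma\|_{L^\infty(\R)}+C_\sigma\|\theta_n-\theta_\infty\|_{L^\infty}$ and invoking A\ref{currylentils}, I would conclude that the composite exponents converge uniformly, $p_n\to p_\infty:=\sigma\circ\theta_\infty$.

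The Poisson equation is the easy half. In \eqref{ermelinda-syrah} the left-hand side passes to the limit at once, since $D\theta_n\to D\theta_\infty$ in $L^2(U)$. For the right-hand side I would combine $\|a_n\|_{L^\infty}\le\|\lambda_n\|_{L^\infty(\R)}\to 0$ (A\ref{cologne}) with the uniform bound on $|Du_n|^{p_n(\cdot)}$ obtained above; since $\psi\in H^1_0(U)\cap L^\infty(U)\subset L^1(U)$, the whole right-hand side is $O(\|\lambda_n\|_{L^\infty(\R)})$ and therefore vanishes. This yields $\int_U D\theta_\infty\cdot D\psi=0$ for every admissible $\psi$, that is, $-\Delta\theta_\infty=0$.

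The first equation carries the only genuine difficulty: passing to the limit in the nonlinear flux $|Du_n|^{p_n(x)-2}Du_n$ across the singular/degenerate threshold. My approach is to read this flux as the evaluation of the field $\Phi(\xi,p):=|\xi|^{p-2}\xi$ along the pair $(Du_n(x),p_n(x))$. The key point is that $\Phi$ is jointly continuous on $\R^d\times[1+\eta,P^+]$, including at $\xi=0$: indeed $|\Phi(\xi,p)|=|\xi|^{p-1}\to 0$ as $\xi\to 0$ uniformly in $p$, precisely because $p-1\ge\eta>0$, a reflection of $\sigma^->1$ in A\ref{chickpeas}. This is where that hypothesis is indispensable, and it is exactly what lets the degenerate ($p>2$) and singular ($p<2$) regimes be treated without distinction. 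Being continuous on a compact set, $\Phi$ is uniformly continuous there, so the uniform convergences $Du_n\to Du_\infty$ and $p_n\to p_\infty$ force $|Du_n|^{p_n(\cdot)-2}Du_n\to|Du_\infty|^{p_\infty(\cdot)-2}Du_\infty$ uniformly on $U$.

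With this in hand the conclusion is routine. Testing \eqref{stanley} against $\varphi\in C_c^\infty(U)$, the left-hand side converges by the uniform convergence of the flux on the bounded domain $U$, while the right-hand side converges because $f_n\to f$ in $L^\infty(U)$; passing to the limit gives $\int_U|Du_\infty|^{p_\infty(x)-2}Du_\infty\cdot D\varphi=\int_U f\varphi$, first for smooth $\varphi$ and then, by density, for the full variable-exponent test class, which is the first equation of the limiting system. The main obstacle, as stressed, is the threshold continuity of $\Phi$ at the origin; once it is secured through $\sigma^->1$ together with the uniform bounds, everything else reduces to standard limiting arguments.
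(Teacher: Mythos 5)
Your proposal is correct, and it handles the only delicate point --- convergence of the nonlinear flux across the set $\{Du_\infty=0\}$ where the exponent may cross $2$ --- by a route that differs from the paper's. You package the difficulty as joint continuity of the map $\Phi(\xi,p)=|\xi|^{p-2}\xi$ on the compact set $\overline{B_R}\times[1+\eta,P^+]$, where the continuity at $\xi=0$ comes from $|\Phi(\xi,p)|=|\xi|^{p-1}\leq|\xi|^{\eta}$, i.e.\ from $\sigma^->1$ in A\ref{chickpeas}; uniform continuity on that compact set then converts the uniform convergences $Du_n\to Du_\infty$ and $p_n\to p_\infty$ (the latter via A\ref{currylentils}, exactly as you split it) into \emph{uniform} convergence of the flux, after which passing to the limit in \eqref{stanley} is immediate. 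The paper instead proves only \emph{pointwise} convergence of the flux, by a two-case analysis (at points where $Du_\infty(x)\neq 0$ it uses the representation $e^{(\sigma_n(\theta_n)-2)\ln|Du_n|}$; at points where $Du_\infty(x)=0$ it bounds the flux by $|Du_n|^{\sigma^--1}\to 0$), and concludes via Lebesgue's Dominated Convergence Theorem with the domination $|J_n|\leq C|D\phi|\in L^1(U)$. Your argument is cleaner in that it avoids the case splitting and yields a stronger (uniform) mode of convergence; the paper's DCT route is more robust in the sense that it would survive under merely pointwise convergence of gradients plus a uniform bound. Two minor remarks: your final density step is actually unnecessary, since uniform convergence of the flux together with $D\phi\in L^{p(\cdot)}(U)\subset L^1(U)$ lets you pass to the limit directly for every $\phi\in W^{1,p(\cdot)}_0(U)$, which is how the paper proceeds; and the treatment of the Poisson equation (right-hand side $O(\|\lambda_n\|_{L^\infty(\mathbb{R})})$ by the uniform gradient bound) is essentially identical in both arguments.
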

\begin{proof}
The statement follows if we prove that 
\begin{equation}\label{ermelinda}
	\int_{U}\left|Du_\infty\right|^{\sigma(\theta_\infty(x))-2}Du_\infty\cdot D\phi \, dx\,=\,\int_{U}f\phi \, dx
\end{equation}
and 
\begin{equation}\label{quarteroni}
	\int_{U}D\theta_\infty\cdot D\psi \, dx\,=\,0,
\end{equation}
for every $\phi\in W^{1,p(\cdot)}_0(U)$ and every $\psi\in H^1_0(U)$. 

We will apply Lebesgue's Dominated Convergence Theorem and we start with \eqref{ermelinda}. Notice that
\begin{align*}
	&\int_{U}\left|Du_\infty\right|^{\sigma(\theta_\infty(x))-2}Du_\infty\cdot D\phi \, dx  =  \int_{U}f_n\phi \, dx\\
		&\qquad + \int_{U}\left(\left|Du_\infty\right|^{\sigma(\theta_\infty(x))-2}Du_\infty-\left|Du_n\right|^{\sigma_n(\theta_n(x))-2}Du_n\right)\cdot D\phi \, dx\\
		&\qquad:=\,\int_UI_n \, dx\,+\,\int_UJ_n \, dx.
\end{align*}
We first consider $J_n$. Since the sequence $(u_n)_{n\in\mathbb{N}}$ converges in $\mathcal{C}^{1,\beta}(U)$ we have that $\left|Du_n\right|$ is uniformly bounded. In addition, it follows from A\ref{currylentils} that $(\sigma_n \circ \theta_n)_{n\in\mathbb{N}}$ is also uniformly bounded. Hence,
\begin{align*}
	\left|J_n(x)\right|\,\leq\,C\left|D\phi(x)\right|\,\in\,L^1(U).
\end{align*}
It remains to verify that
\[
	\left|Du_n(x)\right|^{\sigma_n(\theta_n(x))-2}Du_n(x)\,\to\,\left|Du_\infty(x)\right|^{\sigma(\theta_\infty(x))-2}Du_\infty(x),
\]
as $n\to\infty$, i.e., that pointwise convergence indeed takes place. To that end, we distinguish two cases. 

First, let $x\in U$ be such that $Du_\infty(x)\neq 0$. Then, there exists $N\in\mathbb{N}$ such that $Du_n(x)\neq 0$ for every $n\geq N$. Therefore,
\begin{align*}
	&\left|\left|Du_\infty(x)\right|^{\sigma(\theta_\infty(x))-2}Du_\infty(x)-\left|Du_n(x)\right|^{\sigma_n(\theta_n(x))-2}Du_n(x)\right|\\
		&\quad\leq\, \left|Du_n(x)\right|^{\sigma_n(\theta_n(x))-2}\left|Du_n(x)-Du_\infty(x)\right|\\
		&\qquad\,+\left|\left|Du_n(x)\right|^{\sigma_n(\theta_n(x))-2}-\left|Du_\infty(x)\right|^{\sigma(\theta_\infty(x))-2}\right|\left|Du_\infty(x)\right|\\
		&\quad\leq\,o(1)\,+\,C\left|e^{\left(\sigma_n(\theta_n(x))-2\right)\ln\left|Du_n(x)\right|}-e^{\left(\sigma(\theta_\infty(x))-2\right)\ln\left|Du_\infty(x)\right|}\right|\,\to\,0,
\end{align*}
as $n\to\infty$. 

Consider next the case of $x\in U$ so that $Du_\infty(x)=0$. Now, 
\begin{align*}
	0\,&\leq\,\left|\left|Du_\infty(x)\right|^{\sigma(\theta_\infty(x))-2}Du_\infty(x)-\left|Du_n(x)\right|^{\sigma_n(\theta_n(x))-2}Du_n(x)\right|\\
		&\leq\, \left|Du_n(x)\right|^{\sigma_n(\theta_n(x))-1}\,+\,\left|Du_\infty(x)\right|^{\sigma(\theta_\infty(x))-1}\\
		&= \left|Du_n(x)\right|^{\sigma_n(\theta_n(x))-1}.
\end{align*}
Because $\left|Du_n\right|$ converges uniformly to $\left|Du_\infty\right|$, there exists $N\in\mathbb{N}$ such that $\left|Du_n(x)\right|<1/2$ for every $n>N$. We conclude that 
\begin{align*}
	0\leq&\left|\left|Du_\infty(x)\right|^{\sigma(\theta_\infty(x))-2}Du_\infty(x)-\left|Du_n(x)\right|^{\sigma_n(\theta_n(x))-2}Du_n(x)\right|\\
		&\qquad\,\leq\,\left|Du_n(x)\right|^{\sigma^--1}\,\to\,0,
\end{align*}
as $n\to\infty$. 

To treat $I_n$, notice that  
\[
	\left|f_n(x)\phi(x)\right|\,\leq\,C\left|\phi(x)\right|\,\in\,L^1(U).
\]
In addition, by assumption, we conclude that $f_n(x)\phi(x)\to f(x)\phi(x)$ for every $x\in U$, as $n\to\infty$. Therefore, Lebesgue's Dominated Convergence Theorem ensures that \eqref{ermelinda} holds true.

In what regards \eqref{quarteroni}, notice that $D\theta_n(x)\to D\theta_\infty(x)$, for every $x\in U$. Furthermore, the assumptions of the proposition yield
\[
	\left(\lambda_n \left( \theta_n (x) \right) \left| Du_n \right|^{\sigma_n \left( \theta_n (x) \right)}\right)\psi(x)\,\to\,0,
\]
as $n\to\infty$. A further application of Lebesgue's Dominated Convergence Theorem produces \eqref{quarteroni}. The proof of the proposition is then complete.

\end{proof}

The stability ensured by Proposition \ref{vegan} will play an instrumental role in our improved-regularity argument of section \ref{call bell}. 

\section{Existence of weak solutions}\label{spicy}

In this section, we show that the system \eqref{bettercallsaul} admits at least one weak solution in the sense of Definition \ref{defweaksol}. As is customary for this type of coupled system, we will use Schauder's Fixed Point Theorem. This strategy has been pursued before in the literature, \textit{e.g.}, in \cite{Z07}.

We emphasize two aspects of our argument. First, it bypasses previous dimensional constraints; this advance is due to an improved regularity result produced in \cite{F07}. In fact, suppose $\theta\in\mathcal{C}^{0,\beta}(U)$ is given; under A\ref{chickpeas} and A\ref{currylentils} the variable exponent $p:=\sigma\circ\theta$ is H\"older continuous and bounded (even if $\sigma$ is unbounded above). Together with the conditions on the source term, it frames the first equation in \eqref{bettercallsaul} within the scope of \cite[Theorems 1.1-1.2]{F07} and unveils the improved regularity for $u$. This gain-of-regularity mechanism opens room for standard compact imbedding results; see, for instance, \cite[Theorem 2.84]{DD12}.

\begin{proposition}[Existence of weak solutions]\label{rocinha}

Suppose A\ref{chickpeas}-A\ref{shiraz} are in force. Then, there exists a weak solution of \eqref{bettercallsaul} in the sense of Definition \ref{defweaksol}.

\end{proposition}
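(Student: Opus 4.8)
The plan is to recast the problem as a fixed-point equation for a solution operator and to apply Schauder's Fixed Point Theorem. Fix $\alpha\in(0,1)$, work in the Banach space $X:=\mathcal{C}^{0,\alpha}(\overline{U})$, and let $K\subset X$ be the closed ball $\{v:\|v\|_{\mathcal{C}^{0,\alpha}(\overline{U})}\leq R\}$, a nonempty, convex, closed set, with $R$ to be fixed momentarily. To each $v\in K$ I associate the exponent $p_v:=\sigma\circ v$, which by A\ref{chickpeas}--A\ref{currylentils} is H\"older continuous, bounded below by $\sigma^->1$, and bounded above by a constant $P(R)$ depending only on $R$ and $\sigma$; by Lemma \ref{indopak} there is a weak solution $u_v\in W_0^{1,p_v(\cdot)}(U)$ of the first equation in \eqref{bettercallsaul} with source $f$, unique by strict monotonicity of the $p_v$-Laplacian (equivalently, strict convexity of the associated energy). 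Lemma \ref{pedronunes} then gives $u_v\in\mathcal{C}^{1,\beta}(U)$ with $\|u_v\|_{\mathcal{C}^{1,\beta}}\leq C_1$, where $C_1$ depends only on the data and on $R$, through the uniform control of $p_v$. I define $T(v):=\bar\theta$, the unique weak solution in $H_0^1(U)$ of $-\Delta\bar\theta=a_v\,|Du_v|^{p_v(\cdot)}$, with $a_v:=\lambda\circ v$ and homogeneous Dirichlet data. The key observation, which is precisely the mechanism bypassing the dimensional restriction of \cite{Z07}, is that the $\mathcal{C}^{1,\beta}$ bound renders $|Du_v|$ and $p_v$ bounded, so the right-hand side belongs to $L^\infty(U)$, with $\|a_v|Du_v|^{p_v}\|_{L^\infty}\leq\lambda^+ C_2$ and $C_2=C_2(R,\text{data})$.

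Next I would establish the invariance $T(K)\subseteq K$ together with the compactness of $T$. Standard Calder\'on--Zygmund and Schauder estimates for Poisson's equation with $L^\infty$ data yield $\bar\theta\in\mathcal{C}^{1,\gamma}(\overline{U})$ for every $\gamma\in(0,1)$, with $\|\bar\theta\|_{\mathcal{C}^{1,\gamma}}\leq C_3\lambda^+ C_2$ for a universal $C_3$. Fixing $R$ (say $R=1$) and then setting $\lambda^+:=R/(C_3C_2)$ --- exactly the endogenous choice of $\lambda^+$ announced in A\ref{cologne} --- gives $\|\bar\theta\|_{\mathcal{C}^{0,\alpha}}\leq R$, hence $T(K)\subseteq K$. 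Since this $\mathcal{C}^{1,\gamma}$ bound is uniform over $K$ and $\mathcal{C}^{1,\gamma}(\overline{U})$ embeds compactly into $\mathcal{C}^{0,\alpha}(\overline{U})$ for $\alpha<1$, the image $T(K)$ is precompact in $X$, so $T$ is a compact map.

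For the continuity of $T$ on $K$, take $v_n\to v$ in $\mathcal{C}^{0,\alpha}$; by A\ref{currylentils} the exponents $p_{v_n}\to p_v$ uniformly. The solutions $u_{v_n}$, being uniformly bounded in $\mathcal{C}^{1,\beta}$, converge along subsequences in $\mathcal{C}^1$ to some $u_*$, and passing to the limit in the weak formulation --- via the pointwise-convergence analysis of the nonlinear flux $|Du_{v_n}|^{p_{v_n}-2}Du_{v_n}$ already carried out in the proof of Proposition \ref{vegan} --- shows that $u_*$ solves the first equation with exponent $p_v$; uniqueness forces $u_*=u_v$ and upgrades the convergence to the full sequence. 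Consequently $a_{v_n}|Du_{v_n}|^{p_{v_n}}\to a_v|Du_v|^{p_v}$, and continuous dependence of the Poisson solution on its (uniformly bounded, a.e.\ convergent, hence $L^q$-convergent) data gives $T(v_n)\to T(v)$ in $\mathcal{C}^{1,\gamma}$, a fortiori in $X$. Schauder's theorem then produces $\theta\in K$ with $T(\theta)=\theta$; setting $u:=u_\theta$, the pair $(u,\theta)$ satisfies \eqref{stanley}--\eqref{ermelinda-syrah} and is the desired weak solution in the sense of Definition \ref{defweaksol}.

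The principal obstacles I anticipate are twofold. First, one must ensure that the regularity constant furnished by Lemma \ref{pedronunes} is \emph{uniform} over $K$; this hinges on that constant depending on the exponent only through its H\"older norm and its ellipticity bounds $\sigma^-\leq p_v\leq P(R)$, all controlled by $R$. Second, and more substantively, the continuity of $T$ requires passing to the limit in the variable-exponent, possibly singular/degenerate flux, the step that genuinely exploits the case distinction $Du_\infty=0$ versus $Du_\infty\neq 0$ underlying Proposition \ref{vegan}. A more mundane but necessary wrinkle is the convergence of the composition $\lambda\circ v_n$: since A\ref{cologne} posits only boundedness of $\lambda$, one argues through almost-everywhere convergence and dominated convergence to secure the $L^q$-convergence of the Poisson data. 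I expect the uniform regularity estimate and this limit passage to be where the real work lies.
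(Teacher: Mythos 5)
Your proposal is correct and follows the same strategy as the paper: a Schauder fixed-point argument on the temperature variable, with Lemma \ref{indopak} and Lemma \ref{pedronunes} providing a $\mathcal{C}^{1,\beta}$ (hence globally Lipschitz) solution $u_v$ of the $p(x)$-Laplace equation, so that the Poisson right-hand side is in $L^\infty$, and with the endogenous choice of $\lambda^+$ securing invariance of the ball --- exactly the paper's mechanism for bypassing the dimensional restriction of \cite{Z07}. There are two genuine, if modest, differences. First, you run the fixed point in $\mathcal{C}^{0,\alpha}(\overline{U})$, while the paper works in $\mathcal{C}^{1,\alpha}(\overline{U})$; your choice is slightly more economical, since only H\"older continuity of the temperature is needed to make the exponent $p_v=\sigma\circ v$ H\"older continuous, whereas the paper's $\mathcal{C}^{1,\alpha}$ ball yields a Lipschitz exponent (a stronger hypothesis than Lemma \ref{pedronunes} requires). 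Second, for the continuity of the solution operator the paper cites the structural stability results of \cite{ABO} and \cite{L} to get convergence of $u_n$ in the variable exponent Sobolev space before upgrading via Lemma \ref{pedronunes}; you instead argue by compactness of the uniform $\mathcal{C}^{1,\beta}$ bound, pass to the limit in the weak formulation as in Proposition \ref{vegan}, and identify the limit through uniqueness (strict monotonicity). This is more self-contained and also makes explicit the uniqueness needed for the operator $v\mapsto u_v$ to be well defined, a point the paper leaves implicit. One caveat, shared with the paper: both continuity arguments need $\lambda\circ v_n\to\lambda\circ v$ in some sense, which does not follow from A\ref{cologne} alone (a bounded, discontinuous $\lambda$ defeats pointwise convergence even under uniform convergence of $v_n$); your proposed fix via ``a.e.\ convergence and dominated convergence'' does not repair this, since the a.e.\ convergence is precisely what fails without continuity of $\lambda$. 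This is an implicit assumption in the paper's proof as well, so it does not distinguish your argument from theirs, but it is worth flagging rather than presenting as a routine wrinkle.
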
 

\begin{proof}
The  proof is performed in two steps: we first define an operator and then show it fits the requirements of Schauder's Fixed Point Theorem.

\bigskip

\noindent {\bf Step 1.} We are going to apply Schauder's Fixed Point Theorem in $\mathcal{C}^{1, \alpha} (\overline{U})$, for some $0<\alpha<1$. Take $\theta^\ast \in \mathcal{C}^{1, \alpha} (\overline{U})$ and, using the results in \cite{DHHR11},  solve 

\begin{equation}\label{ines}
\left\{
\begin{array}{ll}
- \div \left( \left| Du \right|^{\sigma \left( \theta^\ast (x) \right) -2} Du\right)  = f & {\rm in } \ U\\
&  \\
u=0 &  {\rm on } \ \partial U,
\end{array}
\right.
\end{equation}
obtaining $u \in W_0^{1, \sigma  \circ \theta^\ast (\cdot)} (U)$. By Lemma \ref{pedronunes}, we have that $u \in \mathcal{C}^{1, \beta} (\overline{U})$, with a uniform estimate, and thus $|Du|$ is globally bounded and so is the right-hand side of 
$$-\Delta \theta  =   \lambda \left( \theta^\ast (x) \right) \left| Du \right|^{\sigma \left( \theta^\ast (x) \right)}.$$
We then solve this Poisson equation, with the Dirichlet condition $\theta = 0$ on $\partial U$, obtaining a solution
$$\theta \in W_0^{2,q} (U), \quad \forall \, 1<q<\infty.$$
In particular, by taking 
\[
	q\,:=\,\frac{2d}{1\,-\,\alpha},
\]
we obtain
\begin{equation}\label{couscous}
\theta \in W^{2,q} (U) \hookrightarrow \mathcal{C}^{1, \gamma} (\overline{U}),
\end{equation}
for every $\gamma < \frac{1+\alpha}{2}$ so, in particular, for $\gamma = \alpha$.
We define the operator 
$$\begin{array}{rccl}
{\mathcal T} : & \mathcal{C}^{1, \alpha} (\overline{U}) & \longrightarrow & \mathcal{C}^{1, \alpha} (\overline{U})\\
\\
& \theta^\ast & \longmapsto & {\mathcal T} (\theta^\ast) := \theta.
\end{array}
$$
It is obvious that a fixed point $\theta^\ast $ of ${\mathcal T}$ provides the local weak solution $(u, \theta^\ast)$ of \eqref{bettercallsaul}. 

\bigskip

\noindent {\bf Step 2.} The continuity of ${\mathcal T}$ is assured by continuous dependence results for the $p(x)$-Laplace equation and for Poisson's equation. In fact, if we take a sequence $\theta_n \to \theta_\infty$ in $\mathcal{C}^{1, \alpha} (\overline{U})$, then the corresponding variable exponents $\sigma (\theta_n (x) )$ converge uniformly. Hence, we can apply the results in \cite{ABO} (see also \cite{L}) to show that $u_n \to u_\infty$, first in the variable exponent Sobolev space and then, using the regularity result in Lemma \ref{pedronunes}, also in  $\mathcal{C}^{1, \beta} (\overline{U})$, for a certain $\beta$. Finally, standard continuous dependence results for Poisson's equation give the continuity of ${\mathcal T}$.

Since the compactness follows from \eqref{couscous}, it remains to show that ${\mathcal T}$ takes the unit ball in $\mathcal{C}^{1, \alpha} (\overline{U})$ into itself.  Let $\left\| \theta^\ast \right\|_{\mathcal{C}^{1, \alpha} (\overline{U})} \leq 1$. Then
$$\left| \sigma  \circ \theta^\ast (x) - \sigma  \circ \theta^\ast  (y) \right| \leq C_\sigma \left|  \theta^\ast (x) - \theta^\ast  (y) \right| \leq C_\sigma \left|  x - y \right| $$
and thus 
$$\left\| \sigma  \circ \theta^\ast\right\|_{\mathcal{C}^{0,1} (\overline{U})} \leq C_\sigma.$$
Then, by Lemma \ref{pedronunes}, we obtain
$$\left\| u \right\|_{\mathcal{C}^{1, \beta} (\overline{U})} \leq C_{data},$$
which gives a uniform control on the $L^\infty$-norm of the gradient $Du$. We then have, by classical elliptic regularity theory, 
\begin{eqnarray*}
\left\| \theta \right\|_{\mathcal{C}^{1, \alpha} (\overline{U})} & \leq & C_{emb} \left\|  \theta \right\|_{W^{2, \frac{2d}{1-\alpha}} (U)}\\
&  \leq & C_{emb} \left\|  \lambda \left( \theta^\ast (x) \right) \left| Du \right|^{\sigma \left( \theta^\ast (x) \right)} \right\|_{L^{\infty} (U)}\\
& \leq & C_{emb} \left\|  \lambda  \right\|_{L^{\infty} (U)} C_{data}\\
& \leq & 1,
\end{eqnarray*}
since $\left\|  \lambda  \right\|_{L^{\infty} (U)} \leq \frac{1}{C_{emb} C_{data}} =: \lambda^+$ due to A\ref{cologne}.

\end{proof}

\section{Improved regularity}\label{call bell}

Proposition \ref{rocinha} establishes the existence of a weak solution, which further belongs to the regularity class
$$(u,\theta)\in\mathcal{C}^{1,\beta}(\overline{U})\times\mathcal{C}^{1,\alpha}(\overline{U}).$$ 
We notice that, were we given a solution in $W^{1,p(\cdot)}_0(U)\times\mathcal{C}^{0,\alpha}(\overline{U})$, elliptic regularity theory and Lemma \ref{pedronunes} would lead to the same levels of regularity. In this section, we make this assumption to improve the regularity for $\theta$.

The tangential analysis methods we will use operate in two distinct layers: they first finely combine stability and compactness for solutions and then localize the analysis through a scaling argument, unveiling the geometric properties of the problem. Accordingly, we proceed by producing an approximation result for the solutions to \eqref{bettercallsaul}.

\begin{proposition}[Approximation Lemma]\label{prop_stanley} Suppose A\ref{chickpeas}-A\ref{shiraz} are in force and let 
$$(u,\theta) \in W^{1,p(\cdot)}_0(U)\times\mathcal{C}^{0,\alpha}(\overline{U})$$ 
be a solution to \eqref{bettercallsaul}. Given $\varepsilon>0$, there exists $\delta>0$ such that, if
\[
	\left\|\lambda\right\|_{L^\infty(U)}\,<\,\delta,
\]
then there exists a harmonic function $h\in\mathcal{C}^\infty (U)$ satisfying
\begin{equation*}\label{pocoyo}
	\left\|\theta-h\right\|_{L^\infty(U)}\,<\,\varepsilon.
\end{equation*}
\end{proposition}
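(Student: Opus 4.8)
The plan is to argue by contradiction through a compactness argument, using the sequential stability of Proposition \ref{vegan} to identify the limiting profile as a harmonic function. Suppose the conclusion fails. Then there exist $\varepsilon_0>0$, a sequence of resistance coefficients with $\|\lambda_n\|_{L^\infty(U)}\to 0$, and associated weak solutions $(u_n,\theta_n)$ to \eqref{bettercallsaul} (keeping $\sigma_n\equiv\sigma$ and $f_n\equiv f$ fixed, so the hypotheses of Proposition \ref{vegan} on the data hold), such that
\[
	\left\|\theta_n-h\right\|_{L^\infty(U)}\,\geq\,\varepsilon_0 \quad \text{for every harmonic } h.
\]

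First I would collect uniform estimates. Since each $\theta_n\in\mathcal{C}^{0,\alpha}(\overline{U})$ (normalized so that $\|\theta_n\|_{\mathcal{C}^{0,\alpha}(\overline{U})}$ is under universal control), the variable exponent $p_n:=\sigma\circ\theta_n$ is uniformly bounded in $\mathcal{C}^{0,\alpha}(\overline{U})$ by A\ref{chickpeas}--A\ref{currylentils}; hence Lemma \ref{pedronunes} furnishes a uniform bound $\|u_n\|_{\mathcal{C}^{1,\beta}(\overline{U})}\leq C$, with $C$ and $\beta$ depending only on the data. In particular $|Du_n|$ is uniformly bounded, so the right-hand side $g_n:=\lambda_n(\theta_n)|Du_n|^{p_n}$ of Poisson's equation satisfies $\|g_n\|_{L^\infty(U)}\leq C\|\lambda_n\|_{L^\infty(U)}\to 0$. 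Classical elliptic regularity for $-\Delta\theta_n=g_n$ with homogeneous Dirichlet data then yields a uniform bound for $\theta_n$ in $W^{2,q}(U)\hookrightarrow\mathcal{C}^{1,\alpha}(\overline{U})$.

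Next I would pass to the limit. Using the compact embeddings of $\mathcal{C}^{1,\beta}$ into $\mathcal{C}^{1,\beta'}$ for $\beta'<\beta$ (Arzel\`a--Ascoli), along a subsequence $u_n\to u_\infty$ and $\theta_n\to\theta_\infty$ in $\mathcal{C}^{1,\beta'}(\overline{U})$. This is precisely the hypothesis of Proposition \ref{vegan}, whose conclusion gives $-\Delta\theta_\infty=0$ in $U$; thus $\theta_\infty$ is harmonic and, by interior regularity, $\theta_\infty\in\mathcal{C}^\infty(U)$. Choosing $h:=\theta_\infty$ and using $\theta_n\to\theta_\infty$ uniformly then contradicts $\|\theta_n-\theta_\infty\|_{L^\infty(U)}\geq\varepsilon_0$ for $n$ large, which closes the argument.

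The main obstacle I anticipate is securing the uniformity of the estimates across the sequence. The constant and exponent in Lemma \ref{pedronunes} depend on the variable exponent $p_n=\sigma\circ\theta_n$, so I must guarantee that $\|p_n\|_{\mathcal{C}^{0,\alpha}(\overline{U})}$ stays bounded independently of $n$; this is where the normalization of $\theta_n$ in $\mathcal{C}^{0,\alpha}$ is essential and must be carefully justified, for instance via the a priori control inherited from the existence theory of Proposition \ref{rocinha}, where the solution lives in the unit ball of $\mathcal{C}^{1,\alpha}(\overline{U})$. Resolving the mild circularity between bounding $u_n$ and bounding $\theta_n$---each estimate feeding the other through the coupling---is the delicate point; once the chain is anchored by a universal bound on $\|\theta_n\|_{\mathcal{C}^{0,\alpha}(\overline{U})}$, the remaining steps reduce to routine applications of compactness and Proposition \ref{vegan}.
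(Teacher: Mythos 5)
Your proposal is correct and follows essentially the same route as the paper: a contradiction argument producing sequences with $\|\lambda_n\|_{L^\infty}\to 0$, compactness in $\mathcal{C}^{1,\beta}$ to extract a limit $(u_\infty,\theta_\infty)$, the sequential stability of Proposition \ref{vegan} to identify $\theta_\infty$ as harmonic, and the choice $h:=\theta_\infty$ to reach the contradiction. In fact, you spell out the uniform-estimate chain (via Lemma \ref{pedronunes} and classical Calder\'on--Zygmund theory for the Poisson equation) that the paper compresses into the phrase ``from the regularity theory available,'' which is a welcome clarification rather than a deviation.
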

\begin{proof}
We argue by a contradiction argument; suppose the statement of the Proposition is false. Then there exist $\varepsilon_0>0$ and sequences of functions $(u_n)_{n\in\mathbb{N}}$, $(\theta_n)_{n\in\mathbb{N}}$ and $(\lambda_n)_{n\in\mathbb{N}}$ such that
\[
	\,\left\|\lambda_n\right\|_{L^\infty(\mathbb{R})}\,\longrightarrow 0
\]
and
\begin{equation*}
	\begin{cases}
		-\div\left(\left|Du_n\right|^{\sigma(\theta_n(x))-2}Du_n\right)\,=\,f&\;\;\;\;\;\mbox{in}\;\;\;\;\;U\\
		& \\
		-\Delta \theta_n\,=\,\lambda_n\left(\theta_n\right)\left|Du_n\right|^{\sigma(\theta_n)}&\;\;\;\;\;\mbox{in}\;\;\;\;\;U,
	\end{cases}
\end{equation*}
but
\[
	\left\|\theta_n\,-\,h\right\|_{L^\infty(U)}\,>\,\varepsilon_0,
\]
for every $n\in\mathbb{N}$ and $h\in\mathcal{C}^\infty(U)$ harmonic. 

From the regularity theory available for $(u_n,\theta_n)$, we infer the existence of a pair $(u_\infty,\theta_\infty)$ such that 
\[
	\left\|u_n\,-u_\infty\right\|_{\mathcal{C}^{1,\beta}(U)}\,+\,\left\|\theta_n\,-\,\theta_\infty\right\|_{\mathcal{C}^{1,\beta}(U)}\,\longrightarrow \,0,
\]
as $n\to\infty$. By the stability of weak solutions, Proposition \ref{vegan}, we conclude that $\theta_\infty$ satisfies
\[
	\Delta\theta_\infty\,=\,0\;\;\;\;\;\mbox{in}\;\;\;\;\;U
\]
and hence, $\theta_\infty\in\mathcal{C}^\infty(U)$ is harmonic. Therefore, by setting $h:=\theta_\infty$, we reach a contradiction and the proof is complete.

\end{proof}

In the next proposition, we produce an oscillation control for the difference of $\theta$ and a paraboloid $P(x)$.

\begin{proposition}\label{maia}
Suppose A\ref{chickpeas}-A\ref{shiraz} are in force and let 
$$(u,\theta) \in W^{1,p(\cdot)}_0(U)\times\mathcal{C}^{0,\alpha}(\overline{U})$$ 
be a solution to \eqref{bettercallsaul}. Then, there exists a universal constant $0\,<\,\rho\,\ll\,1$ such that
\[
	\sup_{B_\rho}\,\left|\theta(x)\,-\,\left(a\,+\,b\cdot x\,+\,\frac{1}{2}x^TMx\right)\right|\,\leq\,\rho^{2},
\]
for some $a\in\mathbb{R}$, $b\in\mathbb{R}^d$ and $M\in\mathcal{S}(d)$.
\end{proposition}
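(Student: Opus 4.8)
The statement is the first (base) step of a tangential-analysis iteration: I will compare $\theta$ to a second-order Taylor polynomial of a nearby harmonic function. After a translation, I assume the ball is centred at the origin with $\overline{B_1}\subset U$, and I record the a priori bound $\left\|\theta\right\|_{L^\infty(U)}\leq C_0$, which is available since $\theta\in\mathcal{C}^{0,\alpha}(\overline{U})$ vanishes on $\partial U$ and $U$ is bounded (equivalently, one normalises so that $\left\|\theta\right\|_{L^\infty}\leq 1$). The role of A\ref{cologne} is that the endogenous constant $\lambda^+$ has not yet been fixed; I will choose it small enough, $\lambda^+\leq\delta$, so that the smallness hypothesis of the Approximation Lemma is automatically in force.

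First I would invoke Proposition \ref{prop_stanley}: given $\varepsilon>0$ (to be calibrated), there is $\delta>0$ so that whenever $\left\|\lambda\right\|_{L^\infty(U)}<\delta$ there exists a harmonic $h\in\mathcal{C}^\infty(U)$ with $\left\|\theta-h\right\|_{L^\infty(U)}<\varepsilon$. Since $h$ is harmonic, I define the paraboloid by its second-order Taylor expansion at the origin,
\[
	P(x)\,:=\,h(0)\,+\,Dh(0)\cdot x\,+\,\tfrac{1}{2}\,x^T D^2h(0)\,x,
\]
so that $a:=h(0)$, $b:=Dh(0)$ and $M:=D^2h(0)\in\mathcal{S}(d)$. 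The interior derivative estimates for harmonic functions give, on $B_{1/2}$, a uniform control $\left\|D^3 h\right\|_{L^\infty(B_{1/2})}\leq C\left\|h\right\|_{L^\infty(B_1)}\leq C\left(C_0+\varepsilon\right)$, which (for $\varepsilon\leq 1$, say) is a universal constant depending only on the data. Taylor's theorem with this third-derivative bound then yields $\left|h(x)-P(x)\right|\leq C|x|^3\leq C\rho^{3}$ for $x\in B_\rho$ with $\rho\leq\tfrac12$.

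Combining the two estimates by the triangle inequality gives, on $B_\rho$,
\[
	\left|\theta(x)-P(x)\right|\,\leq\,\left|\theta(x)-h(x)\right|\,+\,\left|h(x)-P(x)\right|\,\leq\,\varepsilon\,+\,C\rho^{3}.
\]
The proof is then closed by a careful ordering of the choices: I first fix $\rho$ universal and so small that $C\rho^{3}\leq\tfrac12\rho^{2}$ (i.e. $\rho\leq 1/(2C)$), which is legitimate because $C$ is universal and independent of $\varepsilon$ for $\varepsilon\leq1$; then I set $\varepsilon:=\tfrac12\rho^{2}$ and feed this $\varepsilon$ into Proposition \ref{prop_stanley} to obtain the corresponding $\delta$; finally I fix $\lambda^+\leq\delta$ in A\ref{cologne}, so that $\left\|\lambda\right\|_{L^\infty}\leq\lambda^+<\delta$ and the approximation applies. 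This produces $\left|\theta-P\right|\leq\tfrac12\rho^{2}+\tfrac12\rho^{2}=\rho^{2}$ on $B_\rho$, as claimed.

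\textbf{Main obstacle.} The only delicate point is the management of the quantifiers and the endogenous constant: one must verify there is no circularity in choosing $\rho$, $\varepsilon$, $\delta$ and $\lambda^+$, which hinges on the fact that the $\mathcal{C}^3$-bound for the harmonic competitor $h$ is \emph{universal} (uniform in $\varepsilon\leq1$) thanks to the interior estimates for harmonic functions together with the a priori $L^\infty$-bound on $\theta$. Securing that uniform smoothness estimate, and confirming that the smallness of $\lambda$ demanded by Proposition \ref{prop_stanley} is compatible with the standing assumption A\ref{cologne}, is where the care is required; the rest is a routine Taylor estimate.
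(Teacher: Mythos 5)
Your proposal is correct and follows essentially the same route as the paper: invoke the Approximation Lemma (Proposition \ref{prop_stanley}), take the second-order Taylor polynomial of the harmonic competitor $h$ at the origin, control the Taylor remainder via universal interior estimates for harmonic functions, and close with the triangle inequality, fixing $\rho$ first (using that the remainder constant $C$ is universal) and only then setting $\varepsilon:=\rho^2/2$ and extracting $\delta$. The one genuine point of divergence is how the smallness hypothesis $\left\|\lambda\right\|_{L^\infty}<\delta$ is secured. The paper does this by a scaling argument: it rescales $\theta_K:=\theta/K$ with $K:=\left\|\lambda\right\|_{L^\infty}/\delta$, observes that $(u,\theta_K)$ solves a system of the same form with $\sigma_K(t):=\sigma(Kt)$ and $\lambda_K(t):=\lambda(Kt)/K$ still satisfying A\ref{chickpeas}--A\ref{shiraz}, and thereby renders the smallness regime ``unrestrictive,'' i.e.\ no extra constraint is placed on $\lambda$ itself. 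You instead exploit the endogenous nature of $\lambda^+$ in A\ref{cologne} (``to be fixed later, depending only on the data'') and simply shrink $\lambda^+\leq\delta$; this is legitimate under the paper's formulation, is compatible with the value of $\lambda^+$ fixed in the existence argument of section \ref{spicy} (smaller is still admissible there), and has the incidental advantage of avoiding the renormalization factor $K$ that the paper's scaling would reintroduce when the estimate for $\theta_K$ is translated back to $\theta$. The trade-off is that the paper's version keeps the conclusion free of any smallness condition on $\lambda$ beyond what existence already requires, whereas yours builds that smallness into the data. The remaining discrepancies are immaterial: you bound the Taylor remainder by $C\rho^3$ via a $D^3$-estimate where the paper uses $C\rho^{2+\alpha}$, and both choices of $\rho$ ($\rho\leq 1/(2C)$ versus $\rho=(1/(2C))^{1/\alpha}$) serve the same purpose.
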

\begin{proof}
Take $0< \varepsilon <1$, to be determined further down, and use Proposition \ref{prop_stanley} to obtain a harmonic function $h\in\mathcal{C}^\infty (U)$ such that 
\[
	\left\|\theta - h\right\|_{L^\infty(U)} < \varepsilon.
\]
Note that we have a universal control on the $L^\infty$-norm of $h$ since
\[
	\left\|h\right\|_{L^\infty(U)} \leq \left\|\theta - h\right\|_{L^\infty(U)} + \left\|\theta \right\|_{L^\infty(U)} < \varepsilon +M \leq C.
\]

We point out that a standard scaling argument (see, for example \cite{ATU1, T3}) puts us in the (then unrestrictive) smallness regime required by Proposition \ref{prop_stanley}. Indeed, given a solution $(u,\theta)$ to \eqref{bettercallsaul}, define
\[
	\theta_K(x)\,:=\,\frac{\theta(x)}{K},
\]
for some $K>0$ to be fixed. Notice the pair $(u,\theta_K)$ solves 

\begin{equation}\label{everest}
	\begin{cases}
		-\div\left(\left|Du\right|^{\sigma_K(\theta_K(x))-2}Du\right)\,=\,f\\
		\\
		-\Delta\theta_K\,=\,\lambda_K \left(\theta_K(x)\right)\left|Du\right|^{\sigma_K(\theta_K(x))},
	\end{cases}
\end{equation}
where
\[
	\sigma_K(t)\,:=\,\sigma(Kt)\;\;\;\;\;\;\;\;\;\;\;\;\mbox{and}\;\;\;\;\;\;\;\;\;\;\;\;\lambda_K(t)\,:=\,\frac{\lambda(Kt)}{K}.
\]
It is straightforward to check that \eqref{everest} satisfies A\ref{chickpeas}-A\ref{shiraz}. Hence, by choosing
\[
	K\,:=\,\frac{\left\|\lambda\right\|_{L^\infty(U)}}{\delta},
\]
we fall into the required smallness regime.

We then have
\begin{align*}
	\sup_{B_\rho}&\left|\theta(x)-h(0)-Dh(0)\cdot x-\frac{x^TD^2h(0)x}{2}\right|\leq\,\sup_{B_\rho}\left|\theta(x)-h(x)\right|\\
		&\qquad+\sup_{B_\rho}\left|h(x)-h(0)-Dh(0)\cdot x-\frac{x^TD^2h(0)x}{2}\right|\\
		&\qquad\leq \varepsilon\,+\,C\rho^{2+\alpha},
\end{align*}
for some $\alpha\in(0,1)$, where $C>0$ is a universal constant (here the harmonicity of $h$ plays a crucial role). Define 
\[
	\rho\,:=\,\left(\frac{1}{2C}\right)^\frac{1}{\alpha}\;\;\;\;\;\;\;\;\;\;\mbox{and}\;\;\;\;\;\;\;\;\;\;\varepsilon\,:=\,\frac{\rho^{2}}{2}.
\]
By setting $a:=h(0)$, $b:=Dh(0)$ and $M:=D^2h(0)$, we conclude the proof.

\end{proof}

Observe that the constant matrix $M=D^2h(0)$ in the proof of Proposition \ref{maia} satisfies $\tr(M)=0$. The next result is a discrete counterpart of Proposition \ref{maia}, at the scale $\rho^n$, for $n\in\mathbb{N}$. 

\begin{proposition}\label{raimundao}
Suppose A\ref{chickpeas}-A\ref{shiraz} are in force and let 
$$(u,\theta) \in W^{1,p(\cdot)}_0(U)\times\mathcal{C}^{0,\alpha}(\overline{U})$$ 
be a solution to \eqref{bettercallsaul}.Then, there exists a sequence of polynomials $(P_n)_{n\in\mathbb{N}}$ of the form
\[
	P_n(x)\,:=\,a_n\,+\,b_n\cdot x\,+\,\frac{x^TM_nx}{2},
\]
satisfying
\begin{equation}\label{poppelsdorf}
	\tr\left(M_n\right)\,=\,0,
\end{equation}

\begin{equation}\label{ibis}
	\sup_{B_{\rho^n}}\,\left|\theta(x)\,-\,P_n(x)\right|\,\leq\,\rho^{2n}
\end{equation}
and
\begin{equation}\label{felix}
	\left|a_n-a_{n-1}\right|+\rho^{n-1}\left|b_n-b_{n-1}\right|+\rho^{2(n-1)}\left|M_n-M_{n-1}\right|\leq C\rho^{2(n-1)},
\end{equation}
for every $n\in\mathbb{N}$.

\end{proposition}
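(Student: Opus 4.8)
The plan is to establish Proposition \ref{raimundao} by induction on $n$, realising the standard geometric-iteration scheme in which Proposition \ref{maia} is invoked at each scale $\rho^{n}$ against a rescaled copy of the solution, and in which $P_{n}$ is built by correcting $P_{n-1}$ with the trace-free paraboloid produced by that proposition. Before iterating, I would reduce once and for all to the smallness regime $\left\|\lambda\right\|_{L^{\infty}}<\delta$ by the global scaling $\theta\mapsto\theta/K$ already used in the proof of Proposition \ref{maia}; this is harmless since it leaves $u$ and the first equation untouched and merely rescales the temperature by a data-dependent factor. With $P_{0}:=0$, the base case $n=1$ is exactly Proposition \ref{maia}, which furnishes $P_{1}$ with $\tr(M_{1})=0$ and $\sup_{B_{\rho}}\left|\theta-P_{1}\right|\leq\rho^{2}$; moreover, since $a_{1}=h(0)$, $b_{1}=Dh(0)$, $M_{1}=D^{2}h(0)$ for a harmonic $h$ of universally bounded $L^{\infty}$-norm, interior estimates give $\left|a_{1}\right|+\left|b_{1}\right|+\left|M_{1}\right|\leq C$, which is \eqref{felix} for $n=1$.

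\emph{Inductive step.} Assuming $P_{1},\dots,P_{n}$ constructed, I would introduce the rescaled pair
\[
	\theta_{n}(x):=\frac{(\theta-P_{n})(\rho^{n}x)}{\rho^{2n}},\qquad w_{n}(x):=\frac{u(\rho^{n}x)}{\rho^{n}},\qquad x\in B_{1}.
\]
By the inductive hypothesis \eqref{ibis}, $\left\|\theta_{n}\right\|_{L^{\infty}(B_{1})}\leq 1$. Because $\Delta P_{n}=\tr(M_{n})=0$, a direct computation gives the critical-scaling identity
\[
	-\Delta\theta_{n}(x)=-\Delta\theta(\rho^{n}x)=\lambda(\theta(\rho^{n}x))\,\left|Dw_{n}(x)\right|^{\sigma(\theta(\rho^{n}x))},
\]
while $w_{n}$ solves in $B_{1}$ the $p_{n}(x)$-Laplace equation with exponent $p_{n}(x):=\sigma(\theta(\rho^{n}x))$ and source $\rho^{n}f(\rho^{n}\cdot)$. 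Here $Dw_{n}=Du(\rho^{n}\cdot)$ is uniformly bounded by the $\mathcal{C}^{1,\beta}$-regularity of $u$, and $p_{n}$ is Hölder continuous with $\sigma^{-}\leq p_{n}\leq\sigma^{+}:=\sup_{|t|\leq\|\theta\|_{\infty}}\sigma(t)<\infty$; hence the right-hand side of the $\theta_{n}$-equation is controlled in $L^{\infty}(B_{1})$ by $C\left\|\lambda\right\|_{L^{\infty}}<C\delta$, uniformly in $n$. Thus $(w_{n},\theta_{n})$ is a normalised solution in the smallness regime, and I would apply Proposition \ref{maia} to it to obtain a trace-free paraboloid $\widetilde{P}(x)=\tilde a+\tilde b\cdot x+\tfrac{1}{2}x^{T}\widetilde{M}x$ with $\tr(\widetilde{M})=0$, $\sup_{B_{\rho}}\left|\theta_{n}-\widetilde{P}\right|\leq\rho^{2}$, and, as its proof makes plain, $\left|\tilde a\right|+\left|\tilde b\right|+\left|\widetilde{M}\right|\leq C$ universal.

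\emph{Closing the induction.} I would then define
\[
	P_{n+1}(x):=P_{n}(x)+\rho^{2n}\,\widetilde{P}(\rho^{-n}x),
\]
so that $a_{n+1}=a_{n}+\rho^{2n}\tilde a$, $b_{n+1}=b_{n}+\rho^{n}\tilde b$ and $M_{n+1}=M_{n}+\widetilde{M}$. Then \eqref{poppelsdorf} is immediate from $\tr(M_{n+1})=\tr(M_{n})+\tr(\widetilde{M})=0$; writing $x=\rho^{n}y$ with $y\in B_{\rho}$ gives, for $x\in B_{\rho^{n+1}}$, the identity $\left|\theta(x)-P_{n+1}(x)\right|=\rho^{2n}\left|\theta_{n}(y)-\widetilde{P}(y)\right|\leq\rho^{2n}\cdot\rho^{2}=\rho^{2(n+1)}$, which is \eqref{ibis} at level $n+1$; and the three terms in \eqref{felix} equal $\rho^{2n}\left|\tilde a\right|$, $\rho^{2n}\left|\tilde b\right|$ and $\rho^{2n}\left|\widetilde{M}\right|$ respectively, hence sum to at most $C\rho^{2n}$. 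This closes the induction.

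\emph{Main obstacle.} The delicate point is the \emph{uniform} verification that $(w_{n},\theta_{n})$ falls within the scope of Proposition \ref{maia}. Subtracting the full paraboloid $P_{n}$ from $\theta$ means the first-equation exponent is no longer literally of the composite form $\sigma\circ\theta_{n}$ but rather the fixed Hölder exponent $p_{n}(x)=\sigma(\theta(\rho^{n}x))$; one must confirm that the compactness and stability machinery behind Propositions \ref{vegan} and \ref{prop_stanley} survives for such a variable exponent, which it does, since those arguments use only the uniform bounds on $\left|Dw_{n}\right|$ and on $p_{n}$, the uniform convergence of the exponents, and the smallness of $\lambda$. The second thing to monitor is that the smallness of $\left\|\lambda\right\|_{L^{\infty}}$ is scale-invariant under this parabolic rescaling — the gradient-preserving choice $w_{n}=u(\rho^{n}\cdot)/\rho^{n}$ keeps $\left|Dw_{n}\right|$ of order one — so that the single initial reduction to $\left\|\lambda\right\|_{L^{\infty}}<\delta$ guarantees that every application of Proposition \ref{maia} returns the clean, undegraded estimate $\leq\rho^{2}$ with universal constants.
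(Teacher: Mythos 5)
Your proof is correct and follows essentially the same route as the paper's: an induction in which the rescaled error $\theta_n(x)=(\theta-P_n)(\rho^n x)/\rho^{2n}$ (the paper's $v_k$) is shown, thanks to $\tr(M_n)=0$ and the critical scaling of the right-hand side, to satisfy a Poisson equation with small data, so that the harmonic approximation of Propositions \ref{prop_stanley}--\ref{maia} can be reapplied at unit scale and the paraboloid updated exactly as you do. Your explicit handling of the rescaled $u$-equation via $w_n$, of the base case of \eqref{felix}, and your flagging of the fact that the rescaled pair no longer solves \eqref{bettercallsaul} verbatim (only a system with the same uniform bounds and smallness of $\lambda$) are points the paper passes over more quickly, but they do not alter the argument.
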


\begin{proof}
The result follows from an induction argument. The statement of Proposition \ref{maia} accounts for the case $n=1$. Suppose the case $n=k$ has already been verified. We consider the case $n=k+1$.

Consider the auxiliary function
\[
	v_k(x)\,:=\,\frac{\theta(\rho^k x)\,-\,P_k(\rho^k x)}{\rho^{2k}}.
\]
Notice that $v_k$ solves
\[
	-\Delta v_k\,=\,\lambda\left(\rho^{2k}v_k(x)+P_k(\rho^k x)\right)\left|Du(\rho^k x)\right|^{\sigma\left(\rho^{2k}v_k(x)+P_k(\rho^k x)\right)}.
\]
Hence, by imposing a suitable smallness regime on the $L^\infty$-norm of $\lambda(x)$, $v_k$ falls within the scope of Proposition \ref{prop_stanley}. Therefore, there exists $\overline{h}\in\mathcal{C}^\infty(U)$ such that
\begin{equation}\label{niewpoort}
	\sup_{B_\rho}\,\left|v_k(x)\,-\,\overline{h}(0)\,-\,D\overline{h}(0)\cdot x\,-\,\frac{x^TD^2\overline{h}(0)x}{2}\right|\,\leq\,\rho^{2}.
\end{equation}
Set $P_{k+1}(x)$ as
\[
	P_{k+1}(x)\,:=\,a_k\,+\,\rho^{2k}\overline{h}(0)\,+\,\left(b_k\,+\,\rho^kD\overline{h}(0)\right)\cdot x\,+\,\frac{x^T\left(M_k\,+\,D^2\overline{h}(0)\right)x}{2}.
\]
It follows from \eqref{niewpoort} that
\[
	\sup_{B_{\rho^{k+1}}}\,\left|\theta(x)\,-\,P_{k+1}(x)\right|\,\leq\,\rho^{2(k+1)}
\]
and, in addition, 
\[
	\tr\left(M_k\,+\,D^2\overline{h}(0)\right) =0.
\]
Finally, we also have
\[
	\left|a_{k+1}\,-\,a_{k}\right|\,=\,\rho^{2k}\left|\overline{h}(0)\right|,
\]
\[
	\left|b_{k+1}\,-\,b_{k}\right|\,=\,\rho^{k}\left|D\overline{h}(0)\right|
\]
and
\[
	\left|M_{k+1}\,-\,M_k\right|\,=\,\left|D^2\overline{h}(0)\right|.
\]
Hence, 
\[
	\left|a_{k+1}-a_k\right|+\rho^{k}\left|b_{k+1}-b_k\right|+\rho^{2k}\left|M_{k+1}-M_k\right|\leq C\rho^{2k},
\]
where $C>0$ is a universal constant.

\end{proof}

Observe that, from \eqref{ibis} and  \eqref{felix}, we have 
$$a_n \rightarrow \theta(0), \qquad b_n \rightarrow D\theta(0),$$
with 
\begin{equation}\label{flughaven}
\left| a_n - \theta(0) \right| \leq \rho^{2n} , \qquad \left| b_n - D\theta(0)  \right| \leq C\rho^{n}.
\end{equation}
Although we can not conclude about the convergence of the sequence of matrices $(M_n)_n$, we still obtain, again from \eqref{felix}, the estimate 
\begin{equation}\label{precocerto}
\left| M_n \right| \leq Cn.
\end{equation}

We conclude the paper with the proof of Theorem \ref{slicedbread}, which amounts to producing the continuous version of Proposition \ref{raimundao}.

\begin{proof}[Proof of Theorem \ref{slicedbread}]Let $0<r \leq \rho \ll1$ be given. Take $n\in\mathbb{N}$ such that $\rho^{n+1} < r \leq \rho^n$. Observe that then
$$n \leq \dfrac{\ln r}{\ln \rho}.$$

From Proposition \ref{raimundao} and estimates \eqref{flughaven} and \eqref{precocerto}, we obtain
\begin{align*}
	&\sup_{B_r}\left|\theta(x) - (\theta(0) + D\theta(0)\cdot x )\right|\,\leq\,  \sup_{B_{\rho^n}} \left| \theta(x) - (\theta(0) + D\theta(0)\cdot x )\right|\\
	&\qquad\,\leq\, \sup_{B_{\rho^n}}\left| (\theta - P_n) + a_n - \theta(0)+b_n\cdot x - D\theta(0)\cdot x + \dfrac{x^tM_nx}{2} \right|\\
 	&\qquad\,\leq\,\rho^{2n} + \rho^{2n} + C\rho^{2n} + \dfrac{C}{2}n\rho^{2n} \\
	&\qquad\,\leq\, C \left( \rho^{2n} + n\rho^{2n} \right) \\
	&\qquad\,\leq\,  \dfrac{C}{\rho^2} \left( \rho^{2(n+1)} + n\rho^{2(n+1)} \right) \\
	&\qquad\,\leq\,\dfrac{C}{\rho^2} \left( r^2 +\dfrac{\ln r}{\ln \rho} \, r^2 \right) \\
	&\qquad\,\leq\,\dfrac{C}{\rho^2 |\ln \rho|} \left( |\ln \rho| +\ln \frac{1}{r}\right) r^2 \\
	&\qquad\,\leq\,\dfrac{2C}{\rho^2 |\ln \rho|}\, r^2 \ln \frac{1}{r} \\
	&\qquad\,\leq\, Cr^2\ln \frac{1}{r},
\end{align*}
for a universal constant $C>0$. The conclusion that, locally, $\theta$ has continuous first order derivatives, with a Log-Lipschitz modulus of continuity, follows from standard arguments in regularity theory.

\end{proof}

\begin{remark}
If we further assume the function $\lambda$ to be H\"older continuous we enter the realm of Schauder's regularity theory and therefore
$$\theta \in \mathcal{C}^{2,\alpha}(U),$$
for some $\alpha \in (0,1)$.
\end{remark}

\medskip

\bigskip


\begin{thebibliography}{99}

\bibitem{ABO} B. Andreianov, M. Bendahmane and S. Ouaro,   
\textit{Structural stability for variable exponent elliptic problems, I: the $p(x)$-Laplacian kind problems}, 
Nonlinear Anal. {\bf 73} (2010), 2--24.

\bibitem{AC} S.N. Antontsev and M. Chipot, 
\textit{The thermistor problem: existence, smoothness uniqueness, blowup}, 
SIAM J. Math. Anal. {\bf 25} (1994), 1128--1156. 

\bibitem{AMU} D.J. Ara\'ujo, A. F. Maia and J.M. Urbano, 
\textit{Sharp regularity for the inhomogeneous porous medium equation},
J. Anal. Math. {\bf 140} (2020), 395--407.

\bibitem{ATU1} D.J. Ara\'ujo, E. V. Teixeira and J.M. Urbano, 
\textit{A proof of the $\mathcal{C}^{p'}-$regularity conjecture in the plane},
Adv. Math. {\bf 316} (2017), 541--553. 

\bibitem{ATU2} D.J. Ara\'ujo, E. V. Teixeira and J.M. Urbano, 
\textit{Towards the $\mathcal{C}^{p'}-$regularity conjecture in higher dimensions},
Int. Math. Res. Not. IMRN {\bf 2018} (2018), no. 20, 6481--6495. 

\bibitem{DD12}F. Demengel and G. Demengel,
``Functional Spaces for the Theory of Elliptic Partial Differential Equations",
Universitext, Springer, London, 2012.

\bibitem{DHHR11} L. Diening, P. Harjulehto, P. H\"ast\"o and M. R\r{u}\v{z}i\v{c}ka,
``Lebesgue and Sobolev Spaces with Variable Exponents", 
Lecture Notes in Mathematics 2017, Springer, Heidelberg, 2011.

\bibitem{F07} X. Fan, 
\textit{Global $\mathcal{C}^{1,\alpha}$ regularity for variable exponent elliptic equations in divergence form}, 
J. Differential Equations {\bf 235} (2007), 397--417. 

\bibitem{FZ03} X. Fan and Q. Zhang,  
\textit{Existence of solutions for $p(x)$-Laplacian Dirichlet problem}, 
Nonlinear Anal. {\bf 52} (2003), 1843--1852. 

\bibitem{GLS} H. Gao, B. Li and W. Sun,  
\textit{Stability and convergence of fully discrete Galerkin FEMs for the nonlinear thermistor equations in a nonconvex polygon}, 
Numer. Math. {\bf 136} (2017), 383--409.
 
\bibitem{HRS} S. D. Howison, J.F. Rodrigues and M. Shillor, 
\textit{Stationary solutions to the thermistor problem}, 
J. Math. Anal. Appl. {\bf 174} (1993), 573--588.

\bibitem{KM1} T. Kuusi and G. Mingione,
\textit{Gradient regularity for nonlinear parabolic equations}, 
Ann. Sc. Norm. Super. Pisa Cl. Sci. {\bf 12} (2013), 755--822.

\bibitem{KM2} T. Kuusi and G. Mingione,
\textit{The Wolff gradient bound for degenerate parabolic equations}, 
J. Eur. Math. Soc. {\bf 16} (2014), 835--892.

\bibitem{KM3} T. Kuusi and G. Mingione,
\textit{Riesz potentials and nonlinear parabolic equations}, 
Arch. Rational Mech. Anal. {\bf 212} (2014), 727--780.

\bibitem{L} P. Lindqvist,
\textit{Stability for the solutions of $\div \left( |\nabla u|^{p-2} \nabla u \right) =f$ with varying $p$}, 
J. Math. Anal. Appl. {\bf 127} (1987), 93--102.

\bibitem{PT} E. A. Pimentel and E. V. Teixeira, 
\textit{Sharp Hessian integrability estimates for nonlinear elliptic equations: an asymptotic approach}, 
J. Math. Pures Appl. {\bf 106} (2016), 744--767.

\bibitem{R} M. R\r{u}\v{z}i\v{c}ka, 
``Electrorheological Fluids: Modeling and Mathematical Theory", 
Lecture Notes in Mathematics 1748, Springer-Verlag, Berlin, 2000. 

\bibitem{T2} E. V. Teixeira, 
\textit{Sharp regularity for general Poisson equations with borderline sources}, 
J. Math. Pures Appl. {\bf 99} (2013), 150--164.

\bibitem{T1} E. V. Teixeira, 
\textit{Regularity for quasilinear equations on degenerate singular sets}, 
Math. Ann. {\bf 358} (2014), 241--256.

\bibitem{T3} E. V. Teixeira, 
\textit{Universal moduli of continuity for solutions to fully nonlinear elliptic equations}, 
Arch. Ration. Mech. Anal. {\bf 211} (2014), 911--927.
 
\bibitem{T4} E. V. Teixeira, 
\textit{Geometric regularity estimates for elliptic equations},
Mathematical Congress of the Americas, 185--201, Contemp. Math. 656, Amer. Math. Soc., Providence, RI, 2016.
 
\bibitem{TU1} E. V. Teixeira and J.M. Urbano, 
\textit{A geometric tangential approach to sharp regularity for degenerate evolution equations}, 
Anal. PDE {\bf 7} (2014), 733--744.

\bibitem{TU2} E. V. Teixeira and J.M. Urbano, 
\textit{Geometric tangential analysis and sharp regularity for degenerate pdes},
In: Proceedings of the INdAM Meeting "Harnack Inequalities and Nonlinear Operators" in honour of Prof. E. DiBenedetto, Springer INdAM Series, to appear.

\bibitem{Z07} V. V. Zhikov, 
\textit{Solvability of the three-dimensional thermistor problem}, 
Proc. Steklov Inst. Math. {\bf 261} (2008), 98--111.

\end{thebibliography}
\end{document}